\newcommand{\Qed}{\hfill \qedsymbol \medskip}
\newtheorem{thm}{Theorem}
\newtheorem*{rem}{Remark}
\newtheorem{lm}[thm]{Lemma}
\newtheorem{prop}[thm]{Proposition}
\newtheorem{cor}[thm]{Corollary}
\def\dd{\text{d}}
\def\Vol{\operatorname{Area}}
\def\RR{\mathbb{R}}
\def\ZZ{\mathbb{Z}}
\def\Ham{Ham}
\def\Cal{Cal}
\def\Eq{\mathcal{E}}
\def\Eqr{\hat{\mathcal{E}}}
\def\Disk{D}
\def\depth{\operatorname{depth}}
\begin{document}

\title{Hofer's metric on the space of diameters}

\thanks{The author was partially supported by the ISRAEL SCIENCE
FOUNDATION (grant No. 1227/06 *); This is a part of the author's PhD thesis,
being carried out under the guidance of Prof. P.~Biran, at Tel-Aviv
University.}

\date{\today}
\author{Michael Khanevsky}
\address{Michael Khanevsky, School of Mathematical Sciences, Tel-Aviv
  University, Ramat-Aviv, Tel-Aviv 69978, Israel}
\email{khanev@post.tau.ac.il}
\maketitle

\section{Main result} \label{S:result}

Let $\Disk \subset \RR^2$ be the open unit disk, endowed with the symplectic structure 
$\omega = \frac{1}{\pi} \dd x \wedge \dd y$ (so that $\Vol(\Disk) = \int_\Disk \omega = 1$).
A \emph{diameter} $L$ in $\Disk$ is the image of a smooth curve $\gamma: (-1, 1) \to \Disk$
for which there exists $\varepsilon > 0$ such that:
\[
	\gamma (t) = (t, 0) \quad \forall t \in (-1, -1 + \varepsilon] \cup [1-\varepsilon, 1),
\]
and which divides $\Disk$ into two components of equal area.
Denote by $\Eq$ the space of diameters in $\Disk$, endowed with the Hofer metric $d$
(see the definition below).
The present paper is dedicated to the study of metric properties of $(\Eq, d)$ and
their relation to Lagrangian intersections.

Our first result is:
\begin{thm} \label{T:infinite}
	The metric space $(\Eq, d)$ is unbounded: $\sup_{x, y} d(x, y) = \infty$.
\end{thm}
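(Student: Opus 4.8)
The plan is to fix the straight reference diameter $L_0 = \{(t,0) : t \in (-1,1)\}$ and to exhibit a sequence of diameters $L_n \in \Eq$ with $d(L_0, L_n) \to \infty$; this alone forces $\sup_{x,y} d(x,y) = \infty$. Recall that $d(L_0, L) = \inf \int_0^1 (\max_{\Disk} H_t - \min_{\Disk} H_t)\, \dd t$, the infimum taken over compactly supported Hamiltonians $H = (H_t)$ whose time-one flow sends $L_0$ to $L$. Thus the whole problem reduces to producing diameters that are cheap to describe geometrically but provably expensive to reach from $L_0$.

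First I would construct the $L_n$ by iterating a twist. Fix a smooth radial profile supported in a central disk of $\Disk$ and let $\tau$ be the map $(r,\theta)\mapsto (r, \theta + \alpha(r))$, where $\alpha$ increases from $0$ near the boundary of the central disk to $2\pi$ at its centre and vanishes near $\partial \Disk$. Then $\tau$ is generated by a compactly supported autonomous Hamiltonian and sends any diameter to a diameter: it preserves area, hence the bisection property, and it is the identity near the ends, so the image still agrees with $(t,0)$ there. Setting $L_n := \tau^n(L_0)$, the curve wraps $n$ times around the centre. The point of the construction is that $L_n$ is a fixed elementary move iterated $n$ times, so that any reasonable complexity statistic of $L_n$ grows linearly in $n$; the content of the theorem is that this geometric complexity is genuinely detected by the Hofer metric.

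The crux is the lower bound $d(L_0, L_n) \geq c\, n$ for some $c > 0$. Since every region in $\Disk$ has area at most $1$, no invariant built from a single displaced area can work: the unboundedness must come from a quantity that accumulates along the isotopy rather than measuring a net displacement. The mechanism I would use is a monotone penetration statistic — which I expect to coincide with the function $\depth$ introduced below — assigned to each diameter and measuring, through Lagrangian intersection data with the fixed $L_0$, how deeply and how many times the fingers of $L$ are forced across $L_0$. Two things then have to be proved: (i) $\depth(L_n) \to \infty$, which is immediate from the winding construction; and (ii) an energy estimate $\depth(L) \leq C\, d(L_0, L) + C'$, i.e. that a Hamiltonian isotopy of Hofer energy $E$ can raise the penetration depth by at most $O(E)$.

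Statement (ii) is the main obstacle, and I expect it to be where Lagrangian intersection theory enters. The natural route is to show that undoing one unit of winding forces the flow to transport a definite amount of area across a fixed potential barrier, so that each of the $n$ windings contributes at least a quantum $c$ to the oscillation norm. Making these per-winding costs genuinely additive — rather than sharable cheaply by a single clever Hamiltonian — is the delicate point, and it is precisely what a quasimorphism-type or Floer-theoretic lower bound is designed to supply. I would therefore try to realize $\depth$ as (the restriction to $\Eq$ of) a homogeneous quasimorphism that is Lipschitz with respect to $d$, reducing (ii) to the quasimorphism property together with a computation of its value on the twist $\tau$.
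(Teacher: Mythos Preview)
Your architecture is exactly the paper's: iterate a radial twist $\tau$ and bound $d(L_0,\tau^n L_0)$ below by a Hofer--Lipschitz homogeneous quasimorphism on $\Ham(\Disk)$ that descends to $\Eq$. But the proposal stops precisely where the content begins. You write that you would ``try to realize $\depth$ as a homogeneous quasimorphism''; that sentence is the whole theorem, and you have not indicated how to do it. Two concrete gaps:

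\emph{Descent to $\Eq$.} For a quasimorphism $r$ on $\Ham(\Disk)$ to control $d(L_0,\cdot)$ one must have $r|_S \equiv 0$, where $S$ is the stabilizer of $L_0$; otherwise $r(\phi)$ says nothing about $\phi(L_0)$. You never mention this condition. In the paper this is the crux: one takes $r_A = \Cal_A - \Cal$, with $\Cal_A$ an Entov--Polterovich Calabi quasimorphism, and checks $r_A|_S=0$ by writing any $\phi\in S$ as a commuting product $\phi_n\phi_s$ supported in the two half-disks (each of area $1/2 \le A$), on which $\Cal_A$ agrees with $\Cal$.

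\emph{The quasimorphism itself.} Your $\depth$ is an undefined combinatorial statistic of a diameter, not a function on the group, and there is no mechanism offered for turning intersection/winding data into a quasimorphism. (The symbol $\depth$ appearing later in the paper is the depth of a tree, used only for the \emph{upper} bound of Theorem~\ref{T:bound}; it is not a quasimorphism and plays no role here.) The paper does not build anything from intersection data for this step: it imports $\Cal_A$ from \cite{E-P}, uses its Reeb-graph formula to compute $r_A$ on the autonomous rotation, obtaining $r_A(\tau)=A(A-1)\neq 0$, and then homogeneity plus the Lipschitz bound $|r_A|\le(1+2A)\|\cdot\|$ finish the argument.

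So the skeleton matches, but the three load-bearing pieces --- the specific quasimorphism, the vanishing on the stabilizer, and the nonvanishing on the twist --- are absent from the proposal.
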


In the context of Lagrangian intersections we prove:
\begin{thm} \label{T:bound}
	Let $L, L' \subset \Disk$ be two diameters intersecting transversely at $\#(L \cap L')$ 
	points (see Section~\ref{S:bound} for the precise definition).
	Then $d(L, L') \leq \frac{1}{8} \cdot \# (L \cap L') + 1$.
	Moreover, this estimate is sharp in the sense that the linear bound cannot be improved.
	(However, the constant $\frac{1}{8}$ might not be the optimal one.)
\end{thm}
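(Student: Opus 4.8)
The plan is to prove the two assertions by different means: the linear upper bound by an explicit Hamiltonian construction, and the sharpness by exhibiting a family of pairs whose distance is bounded below linearly in the number of intersections. Both rely on the standard description of the Hofer length of a path of diameters $\{L_t\}$ generated by (time-dependent) Hamiltonians $H_t$ with vector fields $X_t$, namely $\int_0^1 \bigl(\max_{\Disk} H_t - \min_{\Disk} H_t\bigr)\,dt$; up to cutting $H_t$ off away from $L_t$ this equals the integral of the oscillation along $L_t$ of the primitive of $\iota_{X_t}\omega$, i.e.\ of the function recording the $\omega$-area swept by initial sub-arcs of $L_t$. Note that a compactly supported Hamiltonian isotopy keeps $L_t$ inside $\Eq$ automatically, since it preserves area and fixes $\partial \Disk$, so I am free to use arbitrary such isotopies.

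For the upper bound I would first spend cost at most $1$ to bring $L$ and $L'$ into a normal form in which the $\#(L\cap L')$ crossings are linearly ordered along $L$ and cut off embedded bigons; that this rearrangement costs at most $1$ is where $\Vol(\Disk)=1$ enters. I would then remove the crossings by successive finger moves, each a Hamiltonian isotopy supported near a bigon that flattens the relevant arc of $L$ onto $L'$. The crucial quantitative input is a universal bound of $\tfrac18$ on the Hofer length of one such move: because both curves bisect $\Disk$, any bigon has area at most $\tfrac12$, and by the area-sweeping formula a length-efficient flattening of a bigon of area $A$ costs at most a fixed fraction of $A$, which one computes to be $\le \tfrac14 A \le \tfrac18$. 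Since each move removes at least one crossing, at most $\#(L\cap L')$ moves are needed, and bounding each by the worst case $\tfrac18$ gives $\tfrac18\#(L\cap L')+1$. Once all crossings are removed the two bisecting arcs are disjoint with common endpoints, hence coincide, so the procedure terminates exactly at $L'$. This estimate is deliberately crude---it ignores the actual areas---so the resulting constant need not be optimal, consistent with the statement.

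For sharpness I would reuse the family underlying Theorem~\ref{T:infinite}: there one produces, for arbitrarily large $T$, a diameter $L'_T$ with $d(L, L'_T)\ge c\,T$. The point to add is a count showing $\#(L\cap L'_T)\le C\,T$, so that $d(L,L'_T)\ge \tfrac{c}{C}\#(L\cap L'_T)$ while $\#(L\cap L'_T)\to\infty$. Concretely these $L'_T$ arise from a $T$-fold winding near the center of $\Disk$, which both forces the distance to grow and contributes a number of crossings linear in $T$. Combined with the upper bound, this pins $d(L,L'_T)$ to order $\#(L\cap L'_T)$ and shows the linear dependence cannot be replaced by anything sublinear.

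The main obstacle is the lower bound feeding the sharpness: a soft area count cannot produce a quantity growing like $T$, so one needs a genuine rigidity tool---an energy--capacity inequality or a Calabi-type quasimorphism on $\Ham(\Disk)$ detecting the winding---which is exactly the hard input already required for Theorem~\ref{T:infinite}. A secondary difficulty, on the constructive side, is justifying the finger moves as honest compactly supported Hamiltonian isotopies: since such isotopies preserve area, flattening a bigon on one side must be accompanied by a compensating motion, so the moves on the two sides of $L$ must be coupled, and one has to check that the $\tfrac18$ per-crossing bound survives this coupling and that successive moves leave an embedded-bigon configuration to which the same estimate reapplies.
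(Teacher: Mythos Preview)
Your sharpness argument is essentially the paper's: the winding family $\phi^t(L_0)$ together with the Calabi quasimorphism lower bound from Section~\ref{S:infinite} gives $d(L_0,\phi^t L_0)\gtrsim t$ while $\#(L_0\cap\phi^t L_0)\lesssim t$, so no sublinear bound is possible. That part is fine.

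The upper bound, however, has a real gap. Your scheme relies on two unjustified claims: (i) any transverse pair can be brought, at Hofer cost $\le 1$, to a ``normal form'' in which the complement consists of linearly ordered bigons; and (ii) erasing a single bigon of area $A$ costs at most $\tfrac14 A$. Claim (ii) is the critical one, and it is not correct as stated: pushing an arc across a bigon of area $A$ costs essentially $A$, not $A/4$ (this is the content of Lemma~\ref{L:swapper}). With the correct cost $\approx A$ per bigon and only the trivial bound $A\le\tfrac12$, your argument yields at best a $\tfrac12\cdot\#(L\cap L')$ bound, not $\tfrac18$. Claim (i) is also suspect: reducing an arbitrary intersection pattern to a linearly ordered bigon chain without increasing crossings is itself a nontrivial isotopy, and there is no evident reason its cost should be uniformly bounded independent of $\#(L\cap L')$.

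The paper's mechanism for extracting $\tfrac18$ is entirely different and does not bound the cost of any single move by $\tfrac18$. One encodes $\Disk\setminus(L_0\cup L')$ as a pair of weighted trees (black/white) with $\sum_v w(v)=\tfrac12$ in each, and removes leaves one at a time via Lemma~\ref{L:transfer}, each removal costing the \emph{area} $w(v)$ of that leaf and transferring the weight two steps toward the root. The total cost is then $\sum_v w(v)\cdot\tfrac{d_T(r,v)}{2}\le \tfrac12\cdot\tfrac{\operatorname{depth}(T)}{2}$, and since the tree of one color has at most $\tfrac{n}{2}+1$ edges, $\operatorname{depth}(T)\le\tfrac{n}{2}+1$, giving $\tfrac{n}{8}+\tfrac14$; the remaining cleanup near the root costs $<\tfrac12$. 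So the factor $\tfrac18$ is the product of the total weight $\tfrac12$ with half the depth bound $\tfrac{n}{4}$, a global accounting rather than a per-move constant.
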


\medskip

The rest of the paper is organized as follows. In Section~\ref{S:infinite} we prove 
Theorem~\ref{T:infinite}. Section~\ref{S:bound} is devoted to the proof of Theorem~\ref{T:bound}. 
Finally, in Section~\ref{S:generalize} we discuss possible generalizations of our results to
other surfaces than the disk.

\medskip

\emph{Acknowledgements}.
I would like to thank prof. L.~Polterovich for suggesting me the proof of Theorem~\ref{T:infinite} 
using the theory of Calabi quasimorphisms as well as for fruitful discussions. I also thank prof. 
P.~Biran for his help with preparation of this paper.

\section{Infiniteness of the diameter} \label{S:infinite}

The distance between two diameters $L_1, \, L_2 \subset \Disk$ is defined by
\[
	d(L_1, L_2) = \inf_{\begin{subarray}{l} \phi \in \Ham(\Disk) \\ \phi (L_1) = L_2\end{subarray} } \| \phi \| ,
\]
where $\| \cdot \|$ stands for the Hofer norm on the group $\Ham(\Disk)$ of compactly supported Hamiltonian 
diffeomorhisms: 
\[
	\| \phi \| = \inf \int_0^1 \max_{p \in \Disk} H (p, t) - 
																								\min_{p \in \Disk} H (p, t) \mathrm{d}t ,
\]
where the infimum goes over all compactly supported Hamiltonians $H: \Disk \times [0, 1] \to \RR$ 
such that $\phi$ is the time-1 map of the corresponding flow.

The distance $d$ is well defined as $\Ham (\Disk)$ acts transitively on the space of diameters. 
Properties of Hofer metric on $\Ham$ imply that $d(\cdot, \cdot)$ is indeed a 
metric (for details, see ~\cite{C}) and is invariant under Hamiltonian diffeomorphisms:
\[
	d(L_1, L_2) = d(\phi L_1, \phi L_2), \quad \forall \phi \in \Ham(\Disk).
\]

\medskip

Denote by $L_0 = \{y = 0\} \subset \Disk$ the standard diameter. Denote by $S$ the stabilizer 
of $L_0$ in $\Ham(\Disk)$, that is,
\[
	S = \{ \phi \in \Ham (\Disk) \; | \; \phi (L_0) = L_0\} ,
\]
Transitivity of the action of $\Ham$ on $\Eq$ implies the identification 
$\Eq \simeq \Ham (\Disk) / S$ with the left cosets of $S$.
We define the \emph{reduced diameter space} to be
\[
	\Eqr = S \backslash \Eq = \{ L \in \Eq\} / \sim ,
\]
where $L \sim \phi L$ for any $\phi \in S$. An equivalent definition is 
\[
	\Eqr = ``S \backslash \Ham (\Disk) / S" = \{ h \in \Ham (\Disk)\} / \sim ,
\]
with $h \sim \phi h \psi$ for any $\phi, \psi \in S$.

\medskip
As $d (L_0, \phi L) = d (\phi L_0, \phi L) = d (L_0, L)$
for all $\phi \in S$, the distance from $L_0$ descends to $\Eqr$, hence 
is well defined on the reduced diameter space.

Theorem~\ref{T:infinite} follows from the following:
\begin{prop} \label{p:infinite}
	$\sup_L d(L_0, [L]) = \infty$, therefore the reduced space $\Eqr$ is unbounded.
\end{prop}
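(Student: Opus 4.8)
The plan is to construct an explicit sequence of diameters $L_n$ for which I can bound $d(L_0, [L_n])$ from below, showing it tends to $\infty$. The acknowledgement already points toward the right machinery: Calabi quasimorphisms. Let me think about how I would actually set this up.

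Let me recall the theory. The Calabi invariant on $\mathrm{Ham}(D)$ of compactly supported diffeomorphisms is $\mathrm{Cal}(\phi) = \int_0^1 \int_D H\, \omega\, dt$. On the open disk this is an honest homomorphism. But more useful here is the theory of Calabi quasimorphisms defined via spectral invariants (Entov–Polterovich), which give homogeneous quasimorphisms $\mu: \widetilde{\mathrm{Ham}} \to \mathbb{R}$ that are Lipschitz with respect to the Hofer norm and restrict to the Calabi homomorphism on diffeomorphisms supported in sufficiently small (displaceable) subsets.

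So here's my plan. A homogeneous quasimorphism $\mu$ with defect $D(\mu)$ satisfies $|\mu(\phi)| \le D(\mu) + $ (Lipschitz constant)$\cdot \|\phi\|$, roughly. To get a lower bound on $d(L_0, L)$, I want a quantity that is (a) controlled by the Hofer norm of any $\phi$ taking $L_0$ to $L$, and (b) descends to $\widehat{\mathcal{E}}$, i.e. is invariant under the stabilizer $S$ on both sides. The double-coset description $\widehat{\mathcal{E}} = S\backslash \mathrm{Ham}/S$ is the key structural hint: I need a function on $\mathrm{Ham}(D)$ that is bi-invariant under $S$ and Hofer-Lipschitz. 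A quasimorphism is not literally bi-invariant, but the standard trick is to combine a quasimorphism with Calabi-type invariants of the two half-disks cut out by $L_0$.

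Concretely, here is what I expect the argument to be. Given a diameter $L$, it divides $D$ into two equal-area halves; let $\phi$ be a Hamiltonian with $\phi(L_0) = L$, so $\phi$ maps the upper/lower halves of $L_0$ to the two sides of $L$. I would choose a Hamiltonian $G$ supported in (say) a small disk inside the upper half of $D$ and push it forward, or better, build a family of diameters $L_n$ that wind more and more — forcing any $\phi$ with $\phi(L_0)=L_n$ to do a large amount of "symplectic work." The invariant I would track is $\mu(\phi \psi \phi^{-1}) - \mu(\psi)$ for a fixed $\psi \in S$ supported near $L_0$, or a difference of Calabi invariants of the two halves computed through $\phi$; because $\mu$ is a quasimorphism, $|\mu(\phi\psi\phi^{-1}) - \mu(\psi)| \le 2 D(\mu)$ would be useless, so instead I would exploit that $\mu$ is Lipschitz: $|\mu(\phi)| \le C\|\phi\|$ for homogeneous $\mu$ restricted to a single diffeomorphism, giving $\|\phi\| \ge |\mu(\phi)|/C$, and then show $\mu(\phi)$ can be forced large by the combinatorics of $L_n$ while remaining $S$-bi-invariant modulo a bounded error.

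The main obstacle, and the step I would spend the most care on, is achieving genuine bi-invariance under $S$ so the lower bound descends to $\widehat{\mathcal{E}}$. A single quasimorphism is only invariant under conjugation up to its defect, and it is not invariant under right/left multiplication by $S$ at all. The resolution I anticipate is to use that elements of $S$ preserving $L_0$ essentially preserve the two half-disks (up to swapping them), so that the Entov–Polterovich quasimorphism associated to each half-disk is $S$-invariant on the nose or up to a sign; taking an appropriate combination (a difference of the two half-disk Calabi quasimorphisms, which also vanishes on small displaceable pieces and hence is honestly defined) should yield a bi-$S$-invariant, Hofer-Lipschitz functional on $\mathrm{Ham}(D)$ that descends to $\widehat{\mathcal{E}}$. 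The remaining work is then purely constructive: exhibit diameters $L_n$ on which this functional grows linearly in $n$, which I would do by taking $L_n$ to be a curve that oscillates so as to interchange regions of total area growing with $n$, making the displacement energy — and hence the quasimorphism value — unbounded.
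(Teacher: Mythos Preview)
Your strategy is the right one and matches the paper's: find a Hofer-Lipschitz homogeneous quasimorphism on $\Ham(\Disk)$ that vanishes on $S$, so that it descends (up to the defect) to $\Eqr$ and gives a lower bound for $d(L_0,\phi(L_0))$; then exhibit a $\phi$ on which it is nonzero and iterate. Where your proposal goes wrong is in the implementation of the functional and the mechanism for $S$-invariance.

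You propose to take ``a difference of the two half-disk Calabi quasimorphisms.'' But a half-disk is displaceable inside $\Disk$, so the Entov--Polterovich Calabi property forces any Calabi quasimorphism to agree with the honest Calabi homomorphism on diffeomorphisms supported there; there is no nontrivial ``half-disk quasimorphism'' to subtract, and in any case such a construction does not yield a quasimorphism on all of $\Ham(\Disk)$. The paper's functional is instead $r_A = \Cal_A - \Cal$ on $\Ham(\Disk)$ for $A\in[1/2,1)$, where $\Cal_A$ is the Entov--Polterovich quasimorphism on the \emph{full} disk. The reason $r_A$ vanishes on $S$ is precisely the Calabi property you allude to but do not deploy correctly: any $\phi\in S$ (after a tiny perturbation) factors as a commuting product $\phi_n\circ\phi_s$ supported in the two half-disks, each of area $1/2\le A$, so $\Cal_A(\phi_n)=\Cal(\phi_n)$ and $\Cal_A(\phi_s)=\Cal(\phi_s)$; homogeneity on commuting elements then gives $r_A(\phi)=0$. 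This exact vanishing, plus the quasimorphism defect, is what makes $r_A$ descend to $\Eqr$; your conjugation-based argument $|\mu(\phi\psi\phi^{-1})-\mu(\psi)|\le 2D(\mu)$ is, as you yourself note, useless here.

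Second, your proposed family $L_n$ (``oscillating curves interchanging regions of growing total area'') is too vague to compute anything against. The paper takes $\phi$ to be the time-$1$ map of an explicit autonomous radial Hamiltonian $H_\varepsilon(r,\theta)=-r^2/2+1/2$ (cut off near $\partial\Disk$), for which $\Cal_A$ can be read off the Reeb graph and one gets $r_A(\phi)=A(A-1)\neq 0$; then $d(L_0,\phi^n(L_0))\ge (n|r_A(\phi)|-C)/(1+2A)\to\infty$. Your outline never reaches a computation of this kind.
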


In the proof we present an explicit autonomous Hamiltonian flow $\phi$ which 
deforms the standard diameter arbitrary far from $L_0$ as $t \to \infty$.

\bigskip

The proof uses results from Entov-Polterovich ~\cite{E-P}. We start with a brief recollection of 
some relevant facts from that paper.

Let $F_t : \Disk \to \RR$, $t \in [0, 1]$ be a time-dependent smooth function with compact support. We define
$\widetilde{\Cal} (F_t) = \int_0^1 \left( \int_\Disk F_t \omega \right) \dd t$. As $\omega$ is exact on $\Disk$,
$\widetilde{\Cal}$ descends to a homomorphism $\Cal: \Ham(\Disk) \to \RR$ which is called 
the Calabi homomorphism. Clearly, for $\phi \in \Ham(\Disk)$, $\Cal(\phi) \leq \| \phi \|$.

Let $G$ be a group. A function $r : G \to \RR$ is called a \emph{quasimorphism} if there exists 
a constant $R$ such that $|r(fg) - r(f) - r(g)| < R$ for all $f, g \in G$. The quasimorphism $r$ 
is called \emph{homogeneous} if it satisfies $r(g^m) = mr(g)$ for all $g \in G$ and $m \in \ZZ$.
Any homogeneous quasimorphism satisfies $r(fg) = r(f) + r(g)$ for commuting elements $f, g$.

For a compactly supported function $F: \Disk \to \RR$ the Reeb graph $T_F$ is defined as a set of connected
componets of level sets of $F$ (for a detailed definition we refer the reader to ~\cite{E-P}). 
For a generic Morse function $F$ this set, equipped with the topology 
induced by the projection $\pi_F: \Disk \to T_F$, is homeomorphic to 
a tree. We endow $T_F$ with a probability measure given by $\mu (A) = \int_{\pi_F^{-1}(A)} \omega$ 
for any $A \subseteq T_F$ with measureable $\pi_F^{-1}(A)$. $\pi_F (\partial \Disk)$ will be 
referred to as the root of $T_F$.

The results from ~\cite{E-P} imply existance a family of homogeneous quasimorphisms 
$\{Cal_A\}_{A \in [1/2, 1)}$ on $\Ham(\Disk)$ with the following properties:
\begin{itemize}
	\item
		$Cal_A (\phi) = \Cal(\phi)$ for any $\phi$ supported in a disk of area $A$.
	\item
		$\Cal_A(\phi) \leq 2A \cdot \| \phi \|$.
	\item
		For $\phi \in \Ham(\Disk)$ generated by an autonomous function $F: \Disk \to \RR$, 
		$\Cal_A (\phi)$ can be computed in the the following way. Consider the set $I$ of all 
		points on the Reeb graph $T_F$ of $F$ which cut from the root subtrees of	total measure 
		at least $A$. $I$ is either a segment on $T_F$ which starts from the root or empty. 
		For non-empty $I$ denote by $x$ the other end of the segment, for $I = \emptyset$ 
		we set $x$ to be the root. Then $\Cal_A(\phi) = \Cal(\phi) - 2A \cdot F(x)$.
\end{itemize}

\medskip

\emph{We now prove Proposition~\ref{p:infinite}.} Let
\[
	r_A = \Cal_A - \Cal, \qquad A \in [1/2, 1) .
\]
For each $A$, $r_A$ is a homogeneous quasimorphism which is Lipschitz with respect to the 
Hofer distance on $\Ham(\Disk)$:
\begin{eqnarray*}
	| r_A (\phi) | & = & \left| \Cal_A (\phi) - \Cal (\phi) \right| \leq \\ 
	& \leq & 2A \cdot \|\phi\| + \|\phi\| \leq (1+2A) \|\phi\|
\end{eqnarray*}

We first show that $r_A$ vanishes on the stabilizer $S$.
Let $\phi \in \Ham (M)$ be such that $\phi (L_0) = L_0$. Replacing $\phi$ with $\psi \circ \phi$
where $\psi \in \Ham(\Disk)$ has arbitrary small norm we may assume that $\phi = \text{Id}$
in some neighborhood of $L_0$.
From the transitivity of $\Ham$ on each half-disk it follows that $\phi$ can be decomposed as
$\phi = \phi_n \circ \phi_s$ where $\phi_n$ ($\phi_s$) are Hamiltonians
supported in the upper (lower) half of $\Disk$ with respect to $L_0$. Since $\phi_n, \phi_s$ 
commute, we have
\[
	\Cal (\phi) = \Cal (\phi_n) + \Cal (\phi_s) = \Cal_A (\phi_n) + \Cal_A (\phi_s) =
\Cal_A (\phi)
\] 
hence $r_A (\phi) = 0$.
	
It follows that $r_A$ ``descends'' to $\Eqr$ in the following sense: for any $\phi \in \Ham (\Disk)$, 
$r_A (\phi)$ can be computed from $[\phi] \in \Eqr$ up to an error $C$ which is bounded by 
twice the defect of the quasimorphism. Moreover, we also have
\[
	d(\phi (L_0), L_0) \geq \frac{| r_A (\phi) | - C}{1+2A}.
\]
Assume that there exists a diffeomorphism $\phi \in \Ham(\Disk)$ for which 
$r_A (\phi) \neq 0$ for some $A \in [1/2, 1)$. Then 
\[
	d(\phi^n (L_0), L_0) \geq \frac{| r_A (\phi^n) | - C}{1+2A} = \frac{n \, | r_A (\phi) | - C}{1+2A}
	\xrightarrow[n \to \infty]{} \infty
\] 
Proposition~\ref{p:infinite} would follow if we show existence of such a $\phi$.

\medskip

For this end, let $H_\varepsilon: \Disk \to \RR$ be an autonomous smooth function given by 
$H_\varepsilon (r, \theta) = -\frac{r^2}{2} + \frac{1}{2}$ on the disk $\{ r \leq 1 - \varepsilon \}$, and which
equals zero near $\partial \Disk$. Let $\phi$ be its time-1 map. Then
the Reeb graph of $H_\varepsilon$ is a segment whose points correspond to circles around the origin 
(level sets of $H_\varepsilon$). The level set $x$ relevant for calculation of $Cal_A(\phi)$ is the 
circle given by $r_x^2 = A$ and so:
\[
	\Cal_A(\phi) = \Cal(\phi) - 2A \cdot H_\varepsilon(x) = \Cal(\phi) + 2A \cdot \frac{A - 1}{2}.
\]
It follows that
\[
 	r_A (\phi) = \Cal_A (\phi) - \Cal (\phi) = A (A-1) \neq 0.
\]

This completes the proof of Proposition~\ref{p:infinite}, hence of Theorem~\ref{T:infinite}, too.

\Qed

In the following section we prove that for any diameter $L$ which is transverse to $L_0$, 
$d (L_0, L) \leq K \cdot \# (L_0 \cap L) + c$ (the proof is given for $K = \frac{1}{8}$).

\medskip

At the same time, the quasimorphisms constructed above allow us to relate distance between diameters to the 
number of intersection points. We use the flow generated by $H_\varepsilon$ from the proof of 
Proposition~\ref{p:infinite} to obtain a lower bound for $K$.

\begin{cor}
 The optimal value for the coefficient $K$ is at least $\frac{1}{16}$.
\end{cor}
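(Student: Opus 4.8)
The plan is to test the linear bound of Theorem~\ref{T:bound} against the explicit family of diameters $L_n = \phi^n(L_0)$, where $\phi$ is the time-$1$ map of the autonomous Hamiltonian $H_\varepsilon$ constructed in the proof of Proposition~\ref{p:infinite}. For this family I would bound the distance $d(L_0, L_n)$ from below using the quasimorphism estimate, bound the number of intersection points $\#(L_0 \cap L_n)$ from above by a direct geometric count, and then compare the two growth rates as $n \to \infty$. If a bound $d(L_0, L) \le K \cdot \#(L_0 \cap L) + c$ holds for all transverse diameters, applying it to $L = L_n$ constrains $K$ from below.

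For the lower bound I would specialize the inequality $d(\phi^n(L_0), L_0) \ge \frac{n\,|r_A(\phi)| - C}{1+2A}$, established in the proof of Proposition~\ref{p:infinite}, to the value $A = \tfrac12$. Since $r_A(\phi) = A(A-1)$, one has $|r_{1/2}(\phi)| = \tfrac14$ and $1+2A = 2$, so that
\[
	d(L_0, L_n) \ge \frac{n}{8} - \frac{C}{2}.
\]
One checks that $A = \tfrac12$ is in fact the optimal choice here, since the function $A \mapsto A(1-A)/(1+2A)$ is decreasing on $[1/2,1)$.

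For the intersection count I would exploit that $\phi$ preserves each circle $\{r = \mathrm{const}\}$ and rotates it through an angle $\Theta(r)$ which equals $\pi$ on the bulk $\{r \le 1-\varepsilon\}$ and decreases monotonically to $0$ as $r \to 1$. Consequently $\phi^n$ rotates the circle of radius $r$ by $n\Theta(r)$, and the image of each of the two radial segments of $L_0$ lying in the collar $\{1-\varepsilon \le r \le 1\}$ becomes a spiral whose angular coordinate sweeps monotonically from $n\pi$ down to $0$. Such a spiral meets the line $L_0$ transversely once for each value of $r$ with $n\Theta(r) \in \pi\ZZ$, which gives $n + O(1)$ crossings per segment, hence $\#(L_0 \cap L_n) = 2n + O(1)$.

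The main obstacle is that on the bulk $\phi^n$ acts as rotation by the integer multiple $n\pi$ of $\pi$, so $\phi^n(L_0)$ coincides with $L_0$ there as a set and the two diameters are not transverse. To make the count legitimate I would perturb $L_0$ to a nearby diameter $L_0'$ which agrees with $L_0$ outside a small neighborhood of the bulk and meets $\phi^n(L_0)$ transversely in only $O(1)$ points inside the bulk; the collar count is unaffected, so $\#(L_0' \cap L_n) = 2n + O(1)$ still holds, while $d(L_0', L_n)$ differs from $d(L_0, L_n)$ by an amount that can be absorbed into the additive constant. Feeding both estimates into the hypothetical bound $d \le K \cdot \# + c$ gives $\frac{n}{8} - \frac{C}{2} \le K(2n + O(1)) + c$; dividing by $n$ and letting $n \to \infty$ yields $\frac18 \le 2K$, that is $K \ge \frac{1}{16}$.
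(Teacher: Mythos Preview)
Your argument is correct and follows the same strategy as the paper: bound $d(L_0,\phi^t L_0)$ below via the quasimorphism estimate with $A=\tfrac12$, bound the intersection number above by the spiral count, and compare growth rates. The only difference is that the paper works with the real-time flow $\phi^t$ for generic (non-integer) $t$, so the bulk is rotated by a non-multiple of $\pi$ and $\phi^t(L_0)$ is automatically transverse to $L_0$ with $\#(L_0\cap\phi^t L_0)\le 2t+1$; this sidesteps your perturbation step entirely.
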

\begin{proof}
	Consider $\phi^t$ given by the time-$t$ map of the flow of $H_\varepsilon$ defined above. 
	It rotates all points of the disk $\{ r \leq 1 - \varepsilon \}$ by angle
	$\pi t$, while each rotation by $\pi$ gives rise to two intersection points.
	For a careful choice of smoothing near the boundary (which doesn't generate 
	any ``unnecessary'' intersections) we thus obtain $\# (L_0 \cap \phi^t L_0) \leq 2t + 1$. Therefore
	\begin{eqnarray*}
		d (L_0, \phi^t L_0) & \geq & \frac{|r_A (\phi^t)| - C}{1+2A} = \frac{A(1-A)\cdot t - C}{1+2A} \geq \\
		& \geq & \frac{A(1-A)}{2 (1+2A)} \left(\# (L_0 \cap \phi^t L_0) - (2C + 1) \right).
	\end{eqnarray*}
	Now note that the expression on the righthand side equals 
	$\frac{\# (L_0 \cap \phi^t L_0)}{16} + c$ for $A = 1/2$.
\end{proof}

\section{Linear bound} \label{S:bound}

Let $L', L'' \subset \Disk$ be two diameters. We say that they intersect transversely 
if the following holds: $\exists \varepsilon_{-1}, \varepsilon_1 > 0$
such that:
\begin{enumerate}
	\item $(-1, -1 + \varepsilon_{-1}] \subset L', L''$
	\item $[1 - \varepsilon_1, 1) \subset L', L''$
	\item $L'$ intersects $L''$ transversely outside $(-1, -1 + \varepsilon_{-1}] \cup [1 - \varepsilon_1, 1)$ 
	and the intesection set is compact.
\end{enumerate}
If this is the case we denote by $\#(L' \cap L'')$ the number of transverse intersection points
between $L'$ and $L''$. Obviously this number is finite. Note that a generic diameter satisfies the
conditions of transverse intersection with $L_0$.

\medskip

We now give a combinatorial description of the image of a diameter
in the reduced space $\Eqr$.

Let $L = \phi (L_0) \subset \Disk$ be a diameter intersecting $L_0$ transversely. We call a point 
$p \in \Disk \setminus (L_0 \cup L)$ \emph{northern} (\emph{southern}) if it belongs to the 
upper (lower) half of $\Disk \setminus L_0$ and \emph{white} (\emph{black}) if it is an image of a 
northern (southern) point under $\phi$. We construct a graph $G (L) = (V, E)$ (see Figure~\ref{F:treeq}) 
associated to $L$ as follows: the vertices $V = V (G)$ are the set of connected components of 
$\Disk \setminus (L_0 \cup L)$, and two vertices $v_1, \, v_2 \in V$ are adjacent if they have 
a common boundary along a segment of $L_0$. Here we omit the two segments adjacent to the boundary 
$\partial \Disk$ where $L$ coincides with $L_0$ and which separate regions of different colors.

The graph $G(L)$ carries the following additional information: to each vertex $v \in V$ 
we associate a \emph{weight} $w (v) \in (0, 1/2)$ which equals the area of the connected 
component corresponding to $v$. Next, we introduce a linear ordering of the edges of $G(L)$ 
which corresponds to their order as segments of $L_0$ (going from left to right).

\begin{figure}[!htbp]
\begin{center}
\includegraphics[width=0.7\textwidth]{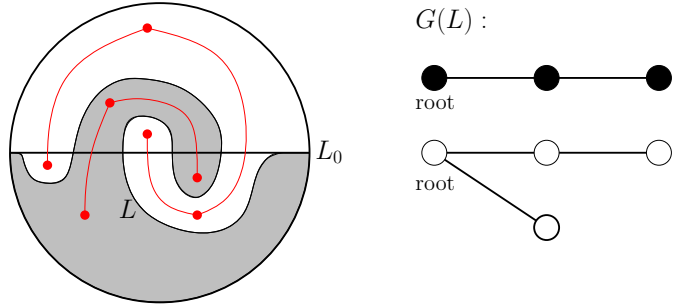}
\end{center}
\caption{Graph construction}
\label{F:treeq}
\end{figure}

We have the following properties of $G = G(L)$:
\begin{itemize}
	\item
		$G$ is a union of two \emph{trees} (one for the black vertices and another for the white ones). 
		This follows from the fact that the set of vertices of the same color is \emph{connected} 
		in $G$ (the union of black / white regions is a half-disk),
		and it \emph{does not contain loops} (otherwise the graph of the opposite color 
		will become disconnected - each loop will bound a connected component).
		
		The union of the two trees is not connected in $G$ since each edge connects vertices of 
		the same color.
	\item
		Edges that connect white / black regions correspond to alternating segments on $L_0$.
		Therefore, the numbers of black and white vertices are either equal or differ by one.
	\item
		The sum of the weights of all white/black/northern/southern regions is equal to 
		$1/2 = \Vol(\Disk, \omega) / 2$.
	\item
		There are two distinguished vertices 
		(a single black and a single white one) which correspond to the regions adjacent to the boundary 
		$\partial \Disk$, see Figure~\ref{F:treeq}. These two vertices play special role because points 
		near the boundary are stationary under compactly supported diffeomorphisms. 
		We will call these vertices the \emph{roots} of the corresponding trees.
	\item
		The total number of vertices is
		\begin{equation}
			\#V(G) = \# (L_0 \cap L) + 3
		\end{equation}
		where the counting for $\# (L_0 \cap L)$ goes only over the transverse intersections.
\end{itemize}

\medskip
The graph $G(L)$ is invariant under the action of Hamiltonians from $S$, 
therefore $G(L)$ depends only on the equivalence class $[L] \in \Eqr$. The 
following claim states that this correspondence $[L] \to G(L)$ is one-to-one.

\begin{prop}
Let $L_1, L_2$ be two diameters with isomorphic graphs, such that the graph isomorphism 
$g: G(L_1) \to G(L_2)$ preserves also weights, colors and ordering of the edges along $L_0$.
Then there exists $\phi \in S$ such that $\phi (L_1) = L_2$.
\end{prop}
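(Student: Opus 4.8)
The plan is to reduce the statement to a purely area-theoretic matching problem and then solve it with a relative Moser argument. Since $\Disk$ is a disk, every compactly supported area-preserving diffeomorphism of $\Disk$ is Hamiltonian: the group $\operatorname{Symp}_c(\Disk, \omega)$ is connected and the flux obstruction vanishes because $H^1_c(\Disk) \cong H_1(\Disk) = 0$. Hence it suffices to produce a compactly supported, area-preserving diffeomorphism $\phi$ with $\phi(L_0) = L_0$ and $\phi(L_1) = L_2$: such a $\phi$ is automatically Hamiltonian, and since it preserves $L_0$ it lies in $S$.

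First I would build a diffeomorphism $\phi_0$ of $\Disk$, not yet area-preserving, that realizes $g$ geometrically: $\phi_0$ is the identity near $\partial \Disk$, preserves orientation, satisfies $\phi_0(L_0) = L_0$ and $\phi_0(L_1) = L_2$, and carries each component of $\Disk \setminus (L_0 \cup L_1)$ onto the component of $\Disk \setminus (L_0 \cup L_2)$ prescribed by $g$. The existence of $\phi_0$ is the combinatorial heart of the matter. The pair $L_0 \cup L$ defines a finite cell decomposition of $\Disk$ — $0$-cells at the boundary endpoints and the transverse intersections, $1$-cells the arcs of $L_0$ and of $L$ between them, $2$-cells the regions — and two such decompositions are related by an orientation-preserving homeomorphism fixing the boundary as soon as their underlying cell complexes are combinatorially isomorphic. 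I would check that the data in $G$ determines this decomposition up to such isomorphism: the intersection points cut $L_0$ into arcs whose colours necessarily alternate, the ordering of the edges records their left-to-right position along $L_0$, the two trees (with their roots) encode how the arcs are glued across $L$ into the white and black regions, and the north/south type of a region is forced by its colour together with its distance to the root. Turning this identification into an explicit smooth $\phi_0$, matching the two configurations region by region and arc by arc compatibly with the orderings, is the step I expect to be the main obstacle; an alternative is to argue by induction on $\#(L_0 \cap L)$, repeatedly pushing an outermost leaf of one tree across $L_0$ to match the corresponding leaf of the target and thereby cancelling a pair of intersections.

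Once $\phi_0$ is in hand I would upgrade it to an area-preserving map without disturbing the curves, and here the hypothesis that $g$ preserves weights is exactly what is needed. Set $\tau = \phi_0^* \omega$; this is an area form agreeing with $\omega$ near $\partial \Disk$, and on every region $R$ of the $L_1$-decomposition one has $\int_R \tau = \int_{\phi_0(R)} \omega = \Vol(\phi_0(R)) = \Vol(R) = \int_R \omega$, the middle equality being weight preservation under $g$. I would first arrange $\phi_0$ to be area-preserving on a neighbourhood of the $1$-skeleton $L_0 \cup L$ — locally this amounts to matching the standard symplectic form near a smooth arc and near a transverse crossing, which is elementary — so that $\tau = \omega$ there. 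Then, working on each closed region separately (a disk with corners on whose boundary $\tau = \omega$ and over whose interior $\tau$ and $\omega$ have equal total area), a Moser deformation rel boundary yields $\psi_R$, equal to the identity near $\partial R$, with $\psi_R^* \tau = \omega$. Gluing the $\psi_R$ along $L_0 \cup L_1$ gives $\psi$ fixing $L_0$ and $L_1$ and preserving each region, and $\phi = \phi_0 \circ \psi$ satisfies $\phi^* \omega = \psi^* \tau = \omega$. This $\phi$ is compactly supported and area-preserving, fixes $L_0$, and sends $L_1$ to $L_2$, which by the reduction above completes the proof. The only genuinely delicate point in this last stage is the behaviour at the corners, i.e. performing Moser on the regions rel their non-smooth boundary, which is handled by the preliminary normalization near the crossings.
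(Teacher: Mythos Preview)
Your argument is correct and complete in outline; it differs from the paper's in its overall architecture. The paper stays inside $S$ throughout: it first observes that edge-order preservation forces the intersection points $L_1\cap L_0$ and $L_2\cap L_0$ to occur in the same order along $L_0$, then uses a Hamiltonian in $S$ supported near $L_0$ to make $L_1$ and $L_2$ coincide in a neighbourhood $U$ of $L_0$, and finally, since corresponding regions now have equal areas and lie in the same half-disk, pushes the arcs of $L_1$ onto those of $L_2$ region by region using Hamiltonians supported in $\Disk\setminus L_0$ (hence still in $S$). Your route instead builds a single global diffeomorphism in two stages --- a smooth combinatorial matching $\phi_0$ of the cell complexes, followed by a Moser correction on each $2$-cell --- and only at the end invokes $\operatorname{Symp}_c(\Disk)=\Ham(\Disk)$ to land in $S$. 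The paper's approach is more hands-on and avoids both the global cell-complex matching and the corner issue in Moser, since it never leaves the symplectic category; your approach is more systematic and makes transparent exactly where each hypothesis (edge order for the topological type, weights for Moser) enters, and it is the template that generalises to higher dimensions or to other surfaces.
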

\begin{proof}
	Let $L_1$, $L_2$ be such that $G(L_1) \simeq G(L_2)$. 
	Correspondence of the orderings of edges $E(L_1), E(L_2)$ implies that the intersection points 
	of $L_i \cap L_0$	appear in the same order. We apply a deformation $\phi \in S$ supported in a neighbourhood
	of $L_0$ to ensure $L_1 \cap U = L_2 \cap U$ for some neighbourhood $U$ of $L_0$. 
	Then, as corresponding regions of 
	$\Disk \setminus \{L_i \cup L_0\}$, $i = 1, 2$ have the same areas, there are no obstructions to deform each one 
	into another by a Hamiltonian supported in $\Disk \setminus L_0$. Applying these deformations 
	consequently to each of the regions we eventually deform $L_1$ to $L_2$.
\end{proof}

\begin{rem}
Not any pair of trees may come from this construction - 
there are topological/combinatorial restrictions on graphs which correspond to 
diameters in $\Disk$.
\end{rem}

We return to the proof of Theorem~\ref{T:bound}.
Applying an appropriate Hamiltonian diffeomorphism to both $L, L'$ we may assume that $L = L_0$.
In what follows we describe a way to construct a Hamiltonian flow which deforms
$L'$ to $L_0$ and whose Hofer length is bounded by $\frac{1}{8} \cdot \# L \cap L' + c$, with $c \leq 1$.
	
\begin{rem}
	The constant $\frac{1}{8}$ might not be sharp, however the previous section gives a lower bound
	of $\frac{1}{16}$ for the actual sharp constant.
\end{rem}

\begin{lm}\label{L:swapper}
	Let $\gamma_1, \gamma_2$ be two smooth embedded curves in $\Disk$ which intersect 
	transversely in two points.
	Denote by $A$ the closed domain bounded by $\gamma_1 \cup \gamma_2$, and denote by $a$ the area of $A$. 
	Let $\Gamma$ be a smooth embedded curve which connects two boundary points $p_1, p_2 \in \partial A$ 
	($p_1 \in \gamma_1$, $p_2 \in \gamma_2$) and does not touch $\gamma_1 \cup \gamma_2$ except 
	for the endpoints. 
	Let $\gamma_3, \gamma_4$ be two curves in $\Disk \setminus A$ which intersect $\Gamma$.
	Then for any small $\varepsilon > 0$ there exists a Hamiltonian diffeomorphism $\phi_\varepsilon$ of 
	norm at most $a + \varepsilon$, supported in a small neighborhood of $A \cup \Gamma$ such that
	$\phi_\varepsilon (\gamma_2) \cap \gamma_1 = \phi_\varepsilon (\gamma_2) \cap \gamma_4= \phi_\varepsilon (\gamma_3) \cap \gamma_1 = \emptyset$.
\end{lm}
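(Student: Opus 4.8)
The plan is to realize $\phi_\varepsilon$ as a single ``finger move'' — a shearing Hamiltonian isotopy that drags a thin collar of the $\gamma_2$-arc of $\partial A$ across the bigon and out along the corridor $\Gamma$ — and to bound its Hofer norm by the symplectic area that this drag sweeps.

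First I would pin down the geometry. Since $\Gamma$ meets $\gamma_1 \cup \gamma_2$ only at its two endpoints, its interior lies in a single component of $\Disk \setminus (\gamma_1 \cup \gamma_2)$; because $\gamma_3, \gamma_4 \subset \Disk \setminus A$ cross $\Gamma$, that component must be the complement of $A$, so $\Gamma$ runs \emph{outside} the lens $A$ from $p_1 \in \gamma_1$ to $p_2 \in \gamma_2$. The region $W$ swept when one pushes the $\gamma_2$-arc through $A$ and then along a thin neighborhood of $\Gamma$ is therefore an embedded strip (a topological disk) whose area is $\Vol(A) = a$ plus the area of a neighborhood of $\Gamma$. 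As $\Gamma$ is a curve, its neighborhood can be taken of area $< \varepsilon$, so $\Vol(W) < a + \varepsilon$.

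Next I would put the configuration into a normal form. By Moser's theorem a neighborhood of $W$ is identified, area-preservingly, with a standard rectangle $R = [0,1] \times [0, \ell]$ of the same area ($\ell = \Vol(W)$), ``unrolling'' the bent strip along the $y$-axis so that $\omega = \dd x \wedge \dd y$, the $\gamma_2$-arc sits at one short end, and the crossings of $\gamma_3, \gamma_4$ with the picture lie along the strip. I would take $\phi_\varepsilon$ to be the time-$T$ map (for $T$ the translation length actually needed, $T \le \ell$) of a Hamiltonian equal to $-x$ away from a collar of $\partial R$, whose flow is the unit-speed translation in the $y$-direction, cut off to be compactly supported. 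The oscillation of $-x$ over $[0,1]$ is $1$, so a direct evaluation of $\int_0^T (\max H - \min H)\, \dd t$ gives the swept area up to the collar error, i.e. at most $\Vol(W) + \varepsilon < a + 2\varepsilon$; relabelling $\varepsilon$ gives the asserted bound $a + \varepsilon$.

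Finally I would verify that this single translation realizes all three disjointness conditions. Choosing the direction of the shift to carry the $\gamma_2$-arc out of the bigon end of $R$, past the corridor crossings with $\gamma_4$, simultaneously pulls it clear of $\gamma_1$ (removing the two bigon intersections) and clear of $\gamma_4$, while the same flow displaces $\gamma_3$ off $\gamma_1$. I expect the main obstacle to be precisely this simultaneous bookkeeping: choosing $W$, the unrolling, the collar cutoff, and the translation length so that \emph{one} finger move in \emph{one} direction separates $\gamma_2$ from both $\gamma_1$ and $\gamma_4$ and $\gamma_3$ from $\gamma_1$ at once, without recreating any of these intersections, all the while keeping every displaced point inside a neighborhood of $W$ so that the swept area — and hence the Hofer norm — stays below $a + \varepsilon$. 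The area estimate itself is routine once the model is fixed; the care lies in the combinatorial arrangement of the four curves relative to the swept strip.
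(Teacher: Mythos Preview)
Your approach is correct and is essentially the same as the paper's: put a neighborhood of $A\cup\Gamma$ into a standard rectangular model by an area-preserving chart, and use an autonomous Hamiltonian that is (up to cutoff) linear across $A$, so that its time-$1$ flow is a translation whose Hofer cost equals the oscillation, namely $a+\varepsilon$. The paper states this in one line plus a picture (the ``outer rectangle / inner part / linear on $A$'' description of $H_\varepsilon$), while you phrase the same computation as ``Hofer norm $=$ swept area''; these are the same estimate after a reparametrization in time.
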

\begin{proof}
	Applying a symplectomorphism, if necessary, we may approximate $A$ by a rectangle.
	Consider the flow generated by a Hamiltonian $H_\varepsilon$ as shown in Figure~\ref{F:swap}. 
	$H_\varepsilon$ is zero outside	the outer rectangle, equals $a+\varepsilon$ in the inner part, 
	linearly increases from $\frac{\varepsilon}{3}$ to $a+\frac{2\varepsilon}{3}$ on $A$ and is smooth 
	in the remaining region. It is easy to see, that for a small $\varepsilon$ and an appropriate 
	choice of smoothing, the time-1 map of this flow gives the desired Hamiltonian diffeomorphism.

\begin{figure}[!htbp]
\begin{center}
\includegraphics[width=0.55\textwidth]{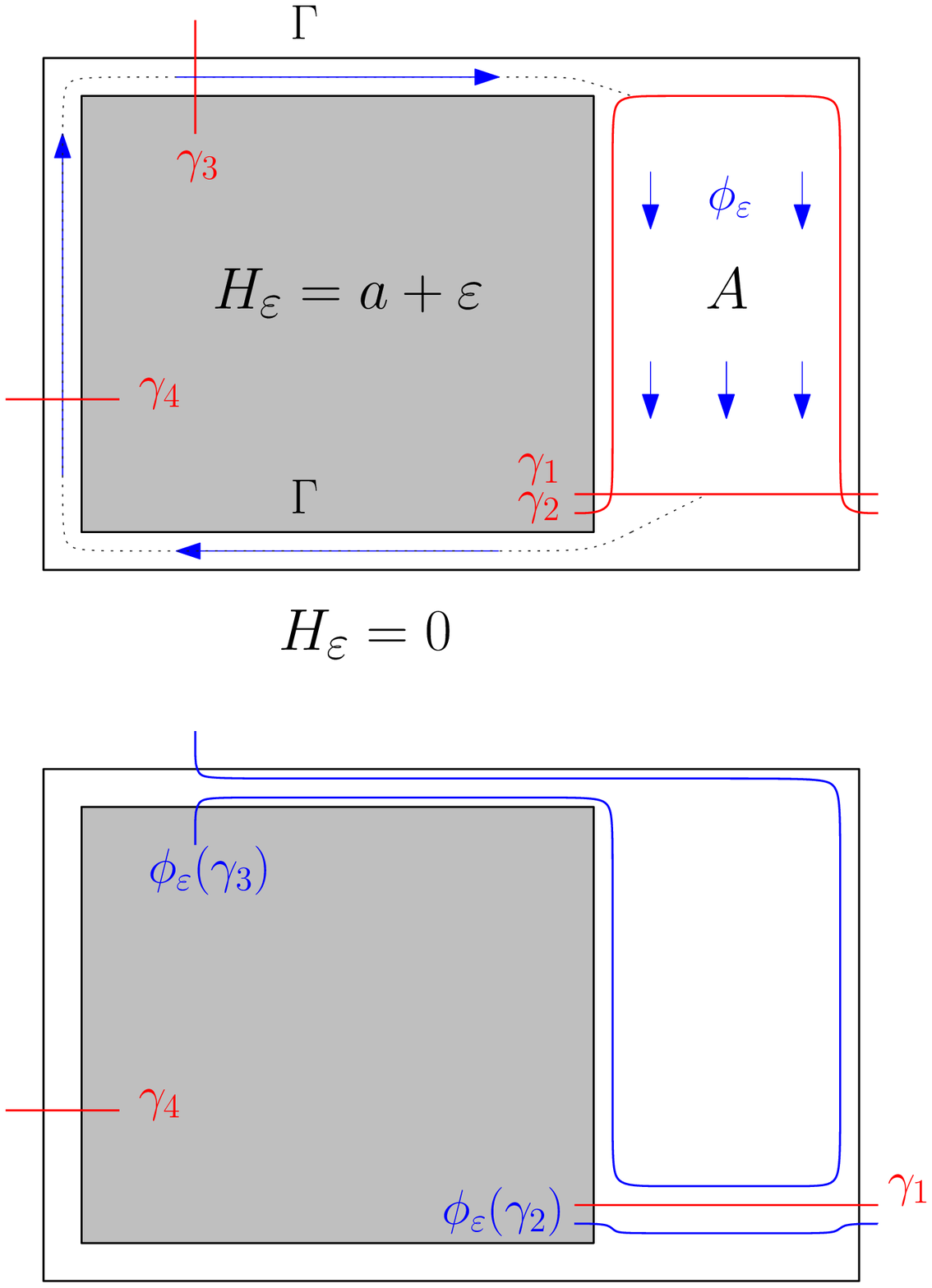}
\end{center}
\caption{}
\label{F:swap}
\end{figure}

\end{proof}

\begin{lm}\label{L:transfer}
	Let $G = G(L)$ be the graph corresponding to a diameter $L \subset \Disk$. 
	Let $v_1 \in V(G)$ be a leaf different from the root and not adjacent to it, 
	denote $a = w (v_1)$,
	$v_2$ be a vertex in $G$ with graph distance $d_G (v_1, v_2) = 2$. 
	Then for arbitrary $\varepsilon > 0$ 
	there exists a Hamiltonian diffeomorphism $\phi$ with $\| \phi \| < a + \varepsilon$ 
	which transfers weight $a$ from $v_1$ to $v_2$ and removes $v_1$ from the graph without 
	introducing new intersection points.
	(Such removal will result in merging of two vertices in the 
	tree of the opposite color.) 
	Application of such $\phi$ does not change areas of other regions by more than $\varepsilon$.
\end{lm}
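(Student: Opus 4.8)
The plan is to realize the weight transfer as a single application of the swapper construction of Lemma~\ref{L:swapper} to the bigon cut out by the leaf $v_1$. First I would observe that, since $v_1$ is a leaf of its tree, the corresponding region has exactly one boundary arc along $L_0$; as $L$ is embedded, this forces the region to be a bigon, bounded by an arc $\alpha \subset L_0$ and an arc $\beta \subset L$ meeting at the two transverse points $p, q \in L \cap L_0$ that I intend to cancel. The hypothesis that $v_1$ is neither the root nor adjacent to it guarantees that $p$ and $q$ lie away from $\partial \Disk$, so the whole construction can be kept compactly supported in the interior.

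Next I would set up the data of Lemma~\ref{L:swapper}: take $\gamma_1$ to be the arc of $L_0$ and $\gamma_2$ the arc of $L$ bounding $v_1$, so that $A = v_1$ has area exactly $a = w(v_1)$. The guiding curve $\Gamma$ I would route from $\partial A$ across the single $L_0$-segment separating $v_1$ from its neighbour $u$ (the distance-$1$ vertex on the path $v_1 - u - v_2$) and on into $v_2$; the hypothesis $d_G(v_1, v_2) = 2$ is precisely what makes $v_2$ reachable by such a $\Gamma$ crossing only one intermediate region. The auxiliary curves $\gamma_3, \gamma_4$ of the lemma I would take to be the arcs of $L$ and $L_0$ adjacent to the bigon that the swept material might otherwise meet. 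The conclusion of Lemma~\ref{L:swapper} then supplies a $\phi = \phi_\varepsilon$ of norm at most $a + \varepsilon$, supported in a small neighbourhood of $A \cup \Gamma$, which destroys $p, q$ while keeping $\phi(\gamma_2), \phi(\gamma_3)$ clear of $\gamma_1, \gamma_4$, i.e. creates no new intersections with the surrounding arcs.

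Then I would read off the effect on the combinatorics. Near each of $p, q$ the four local regions comprise the white bigon, a second white region across $L_0$, and two black regions meeting diagonally; cancelling $p$ and $q$ deletes the leaf $v_1$ and fuses these diagonal black regions into one, which is the asserted merge in the opposite-colour tree. Because $\phi$ is supported in a thin neighbourhood of $A \cup \Gamma$ and transports the area $a$ of the vanishing bigon along $\Gamma$ into $v_2$, the weight $w(v_2)$ increases by $a$ while every other region's area is altered by at most $\varepsilon$ (the slack built into the support and the smoothing). This yields all four asserted properties.

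The main obstacle I anticipate is not the norm bound — that is handed to us by Lemma~\ref{L:swapper}, the point being that transporting across graph-distance $2$ still costs only the bigon area $a$ rather than accumulating cost along $\Gamma$ — but rather the bookkeeping that the freed area is deposited in $v_2$ and in no other region, and that precisely the intended pair of opposite-colour vertices merges. Verifying this cleanly requires choosing $\Gamma$ so that it meets $L_0 \cup L$ in a controlled way, entering $v_2$ and reaching nothing beyond it, and arranging the smoothing of Lemma~\ref{L:swapper} so as not to drag any third region's boundary. This is where the conditions $d_G(v_1, v_2) = 2$ and ``$v_1$ a leaf not adjacent to the root'' do the real work.
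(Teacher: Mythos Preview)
Your plan coincides with the paper's proof: identify the leaf $v_1$ as a bigon, take $\gamma_1,\gamma_2$ to be its $L_0$- and $L$-boundary arcs with $\gamma_3=L\setminus\gamma_2$, $\gamma_4=L_0\setminus\gamma_1$, choose a guiding curve $\Gamma$ through the common neighbour into $v_2$, and invoke Lemma~\ref{L:swapper}. The one point to correct is the routing of $\Gamma$: by the hypotheses of Lemma~\ref{L:swapper} the curve must join a point of $\gamma_1$ to a point of $\gamma_2$ (both on $\partial A$), so $\Gamma$ should not terminate inside $v_2$ but rather pass through the intermediate vertex and $v_2$ along the same-colour domain and then return to $\gamma_2$ without meeting $L_0$ again --- this is exactly the paper's choice, and it is what forces the freed area to be deposited in $v_2$ rather than elsewhere.
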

\begin{proof}
	Without loss of generality we may assume that both $v_1, v_2$ are black northern vertices. 
	Let $U$ be a neighbourhood of $v_1$ in $\Disk$, denote $\gamma_1 = L_0 \cap U$, 
	$\gamma_2 = L \cap U$. The statement of the lemma implies that $\gamma_2$ is transverse to
	$\gamma_1$ in intersection points.
	Denote $\gamma_3 = L \setminus \gamma_2$, $\gamma_4 = L_0 \setminus \gamma_1$.
	Let $v \in V(G)$ be the common neighbour of $v_1, v_2$.
	Pick a curve $\Gamma$ which starts from $\gamma_1$, intersects $L_0$ going 
	through $v$ to $v_2$ along the black domain, and finally arrives to $\gamma_2$ without 
	intersecting neither $\gamma_2$ nor $L_0$ anymore (see Figure~\ref{F:treeop}).
	
	The lemma follows by application of Lemma~\ref{L:swapper}.
	The statement about $\gamma_3, \gamma_4$ ensures that $\phi$ 
	does not create additional intersection points between $L_0$ and $L$.

\begin{figure}[!htbp]
\begin{center}
\includegraphics[width=0.9\textwidth]{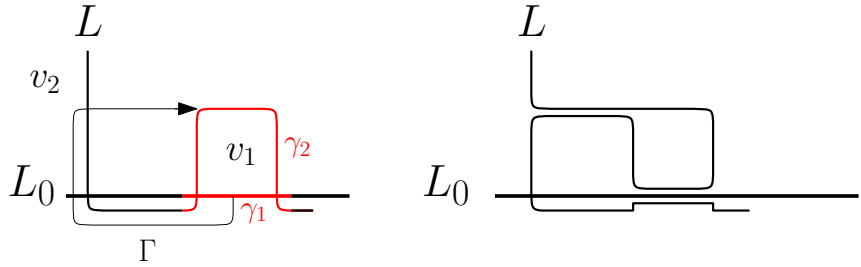}
\end{center}
\caption{Leaf deletion}
\label{F:treeop}
\end{figure}

\end{proof}

\emph{Proof of Theorem~\ref{T:bound}.}
Let $L'$ be a diameter transverse to $L_0$. Denote by $T$ the black tree of $G(L')$.
Ler $r$ be the root vertex of $T$.
Denote by $n = \# L' \cap L_0$ the number of transverse intersection points.
Note that $\depth(T)$ is bounded by $\#E(T) \leq n/2 + 1$.

We pass in turn over vertices of $T$, each time selecting a leaf, wiping it out the tree 
and transferring its weight 2 steps in direction of $r$ with the help of Lemma~\ref{L:transfer}. 
In the end we remain with all the weight concentrated at the root and its neighbours, 
with all other vertices having disappeared from the graph. 
Weight of each vertex $v \in V(T)$ is involved in at most $\frac{d_T (v, r)}{2}$ operations of this 
type, therefore the total cost of these transfers is bounded by
\[
	\sum \| \phi_j \| \leq \sum_{v_i \in T} \left( w(v_i) + \varepsilon \right) \cdot \frac{d_T(r, v_i)}{2} <
	\frac{1}{2} \cdot \frac{\depth(T)}{2} < \frac{n}{8} + \frac{1}{2} .
\]

It remains to transfer the weights from the neighbours of $r$ to the root. 
This operation can be achieved by a cost of $\sum_{v \neq r} w (v) + \varepsilon < 1/2$. This 
is so because now the graph corresponds to a diameter which has a ``simple'' intersection 
pattern with $L_0$, as depicted in Figure~\ref{F:treepush}.

\begin{figure}[!htbp]
\begin{center}
\includegraphics[width=0.3\textwidth]{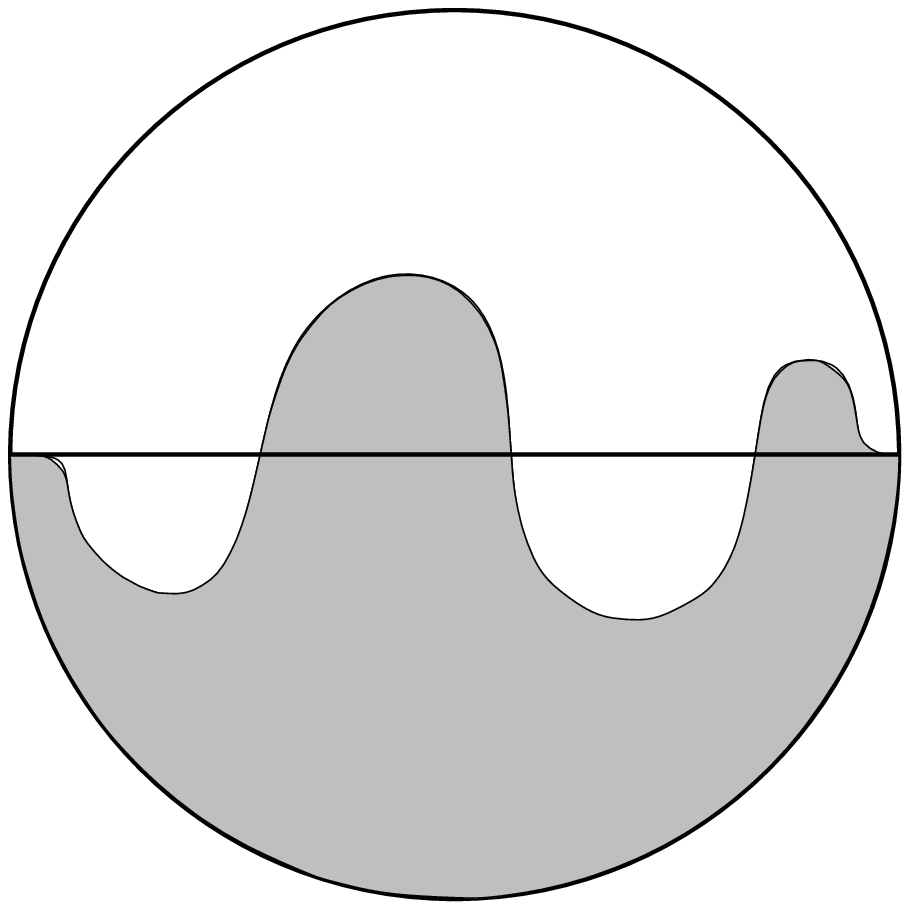}
\end{center}
\caption{}
\label{F:treepush}
\end{figure}

\Qed

\section{Generalizations} \label{S:generalize}

One may try to extend the results of this paper to other surfaces. For compact surfaces $\Sigma$ 
of positive genus the problem is quite well understood. For example, using the argument introduced in 
~\cite{L-MD} one may prove that a closed embedded smooth curve $\gamma \subset \Sigma$ with nontrivial 
$[\gamma] \in H_1 (\Sigma; \ZZ)$ can be pushed arbitrarily far from itself (in Hofer metric) using 
Hamiltonian isotopies.

In this section we state 
(without proof) few results for the cylinder $\Gamma = S^1 \times (-1, 1)$ and the sphere 
$S^2$ (both equipped with their standard symplectic structures).

We define \emph{equator} as an image of a simple smooth closed curve which
divides the manifold into two  components of equal area.
In the case of $\Gamma$ we add the requirement that the equator generates $H_1(\Gamma; \ZZ)$.
An equivalent definition is that an equator is an image of the standard equator $L_0$
($L_0 = \{z = 0\}$ for $S^2$, $L_0 = S^1 \times \{0\}$ for $\Gamma$) by a compactly 
supported Hamiltonian diffeomorphism.

\medskip

In the case of $\Gamma$ both Theorem~\ref{T:infinite} and Theorem~\ref{T:bound} remain 
true (with a different lower bound for the constant $K$). The proof is similar to
that presented above. See ~\cite{Kh-Thesis}.

\medskip

However, the case of $S^2$ is different. We do not know whether the space of equators $\Eq$ has finite 
or infinite Hofer diameter. The argument used in the proof of Theorem~\ref{T:infinite}
fails as it is not known whether there exist two different homogeneous Calabi quasimorphism on $S^2$
(there exists at least one, see ~\cite{E-P}). 
The question about finiteness of the diameter is closely related to the question of existance of 
another Calabi quasimorphism. For $S^2$ one may also show that 
$d(\phi^t (L_0), L_0) \leq c(\phi)$ (the constant is independent of $t$) for any family 
$\phi^t \in \Ham (S^2)$ generated by an autonomous flow. Therefore $L_0$ cannot be pushed 
infinitely far from itself using 1-parametric family of diffeomorphisms generated by any 
autonomous $F : S^2 \to \RR$.

A version of Theorem~\ref{T:bound} remains true. Actually, using the fact that for $S^2$ the equator 
graphs do not have distinguished roots which contain stationary points, we can improve the linear estimate to a logarithmic one:

\begin{thm} \emph{(See ~\cite{Kh-Thesis})}
	Let $L, L'$ be two equators in $S^2$ with transverse intersections. Then for any
	$\varepsilon > 0$, 
	\[
		d (L, L') \leq \log_{9/4-\varepsilon} (\# L \cap L') + c(\varepsilon).
	\]
	where $c(\varepsilon)$ is a constant which depends on $\varepsilon$ but not on the 
	equators $L, L'$.
\end{thm}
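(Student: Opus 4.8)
The plan is to mimic the tree-reduction strategy behind Theorem~\ref{T:bound}, but to exploit the essential structural difference on $S^2$: since there is no boundary, the black and white trees of the graph $G(L')$ (constructed exactly as in Section~\ref{S:bound}, now relative to $L = L_0$) carry no distinguished root of stationary points. Consequently we are free to simplify the configuration by a balanced, recursive procedure rather than by funnelling all the weight toward a single fixed vertex, which was what forced the linear estimate in the disk. As before, after a Hamiltonian change of coordinates we may assume $L = L_0$, so it suffices to exhibit a Hamiltonian isotopy taking $L'$ to $L_0$ whose Hofer length obeys the stated logarithmic bound.

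The basic move remains the area-swap of Lemma~\ref{L:swapper} (and the derived leaf-deletion of Lemma~\ref{L:transfer}), whose Hofer cost is the swapped area $a+\varepsilon$. The new ingredient is \emph{parallelisation}. Each swap Hamiltonian may be taken nonnegative, so if we perform several swaps whose supports are pairwise disjoint, the composite is generated by the sum of the individual Hamiltonians and its Hofer norm equals the \emph{maximum} of the individual norms rather than their sum. Hence a whole \emph{round} of simultaneous, disjointly supported simplifications costs at most the area of a single hemisphere (up to $\varepsilon$), no matter how many elementary swaps it contains.

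I would then iterate such rounds. The claim to establish is combinatorial: using the rootless structure of the two trees, one round of pairwise-disjoint leaf-deletions can be arranged to remove a constant \emph{fraction} of the vertices of $G(L')$, i.e. to reduce $\#(L \cap L')$ by a definite multiplicative factor $\rho$, while creating no new intersection points. Since each round costs at most $1+\varepsilon$ and the crossing count drops geometrically, after $\log_\rho \#(L \cap L')$ rounds we are left with a configuration having only $O(1)$ crossings, which is then cleaned up at bounded cost. Tuning the trade-off between the per-round cost and the reduction factor $\rho$, and absorbing the $\varepsilon$-losses from smoothing the swap Hamiltonians, yields the effective base $9/4-\varepsilon$ together with the constant $c(\varepsilon)$.

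The main obstacle is precisely this combinatorial/geometric lemma: showing that at every stage one can select a family of leaves (together with their distance-$2$ targets) in the black and white trees whose deletion operations have pairwise disjoint supports on $S^2$, that together these eliminate a fixed proportion of the vertices, and that they introduce no spurious intersections with $L_0$. The area bound per round is essentially free once disjointness is secured, so the entire force of the argument — and the exact value of the constant $9/4$ — lies in quantifying how large such an independent family of simplifications can be made, uniformly over all equator graphs.
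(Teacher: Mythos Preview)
The paper does not prove this theorem: Section~\ref{S:generalize} explicitly announces that its results are stated ``without proof'', and the theorem itself is attributed to \cite{Kh-Thesis} (in preparation). There is therefore no proof in the paper to compare your proposal against.

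That said, your outline is consistent with the single hint the paper does provide --- that the improvement from linear to logarithmic comes precisely from the absence of a distinguished root on $S^2$ --- and your parallelisation principle (disjointly supported nonnegative swap Hamiltonians cost only their maximum) is correct. What remains is exactly the step you flag as the main obstacle: the combinatorial/geometric lemma guaranteeing that at every stage one can choose a family of leaf-deletions with pairwise disjoint supports on $S^2$ that removes a fixed fraction of the vertices and creates no new intersections. Without this, what you have is a strategy rather than a proof. Note also that ``constant fraction per round at bounded cost'' does not by itself yield the base $9/4$: if a round costs $c$ and reduces the vertex count by a factor $\rho$, the resulting base is $\rho^{1/c}$, so $9/4 = (3/2)^2$ would follow from something like $\rho = 3/2$ at cost $c = 1/2$ (a hemisphere) per round; your sketch does not isolate why those particular values are attainable.
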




\begin{thebibliography}{Kh-Thesis}

\bibitem[Ch]{C} Y.~Chekanov, {\it Invariant Finsler metrics on the space of Lagrangian embeddings}, 
Math. Z., 234 (2000), 605–-619.

\bibitem[E-P]{E-P} M.~Entov, L.~Polterovich, {\it Calabi quasimorphisms and quantum homology},  
IMRN, no. 30, (2003),  1635--1676.

\bibitem[Kh-Thesis]{Kh-Thesis} M.~Khanevsky, {\it PhD thesis}, 
In preparation.

\bibitem[L-MD]{L-MD} F.~Lalonde, D.~McDuff, {\it Hofer's $L^\infty$-geometry:
															energy and stability of Hamiltonian flows, part II},  
Invent. Math. 122 (1995), 35–69

\end{thebibliography}
\end{document}